\newtheorem{thm}{Theorem}[subsection]
\newtheorem{cor}[thm]{Corollary}
\newtheorem{lem}[thm]{Lemma}
\newtheorem{definition}[thm]{Definition}
\newtheorem{example}[thm]{Example}
\newtheorem{remark}[thm]{Remark}
\numberwithin{equation}{subsection}
\newcommand{\R}{\mathbb{R}}  
\newcommand{\Z}{\mathbb{Z}}
\newcommand{\F}{\mathbb{F}}
\newcommand{\Q}{\mathbb{Q}}
\newcommand{\D}{\mathbf{D}}
\newcommand{\V}{\mathbf{V}}
\begin{document}


\title{On Certain Computations of Pisot Numbers}


\author{Qi Cheng}
\address{School of Computer Science, The University of Oklahoma,
Norman, OK 73019} \email{qcheng@cs.ou.edu}
\author{Jincheng Zhuang}
\address{School of Computer Science, The University of Oklahoma,
Norman, OK 73019} \email{jzhuang@ou.edu}

\thanks{}

\keywords{Pisot numbers, lattice reduction, Tau functions, modular exponentiation}


\begin{abstract}
This paper presents two algorithms on certain computations about
Pisot numbers. Firstly, we develop an algorithm that finds a Pisot
number $\alpha$  such that $\Q[\alpha] = \F$ given a real Galois
extension $\F$ of $\Q$ by its integral basis.
This algorithm is  based on the lattice reduction, and it runs in
time polynomial in the size of the integral basis. Next, we show
that for a fixed  Pisot number $\alpha$, one can compute $
[\alpha^n] \pmod{m}$ in time polynomial in  $(\log (m n))^{O(1)}$,
where $m$ and $n$ are positive integers.

\end{abstract}


\maketitle



\section*{Introduction}

A Pisot number (or Pisot-Vijayaraghavan number) is a real algebraic
integer greater than 1, whose Galois conjugates over $\Q $ are all
of modulus strictly less than 1. Generally, given a real number
$\varepsilon > 0,$ an algebraic integer is called an
$\varepsilon-$Pisot number if all its Galois conjugates have modulus
less than $\varepsilon$ \cite{FS04}. The most famous Pisot number is
the golden radio $ 1+ \sqrt{5} \over 2$. Pisot numbers have many
interesting properties in their own right. Not surprisingly, they
have many applications in diverse areas, such as harmonic analysis,
statistics and the Diophantine approximation. For an introduction to
the Pisot numbers, we refer the reader to the books \cite{SL63} and
\cite{PS92}.

In this paper, we study two computational problems about Pisot
numbers: one is to find  a Pisot number generating a real Galois
number field, and the other is to compute the modular exponentiation
of a Pisot number.


There are several known ways to find Pisot numbers in different
situations. To name a few: Dufresnoy and Pisot \cite{DP55} developed
a method to find all Pisot numbers in the real interval
$[1,\frac{1+\sqrt{5}}{2}+\varepsilon],$ where
$0<\varepsilon<0.0004.$ Boyd \cite{Boyd78} modified Dufresnoy and
Pisot's algorithm to determine all the Pisot numbers in an interval
of the real line $[\alpha,\beta]$ if there are finitely many in the
interval. Bell and Hare \cite{BH05} gave a classification of some
Pisot-Cyclotomic numbers. Utilizing the
Lenstra-Lenstra-Lovasz (LLL) algorithm \cite{LL82}, we show the following result.

\begin{thm}
Let $\F$ be a real Galois extension over $\Q$
given by its integral basis
$\beta_1,\cdots,\beta_k $. There exists a polynomial time algorithm
to determine integers $a_1,a_2, \cdots, a_k$ such that
$$\alpha = a_1 \beta_1 + \cdots + a_k \beta_k $$
is a Pisot number and $\Q[\alpha] = \F$.
\end{thm}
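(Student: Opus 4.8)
The plan is to recast the search for the integers $a_i$ as the problem of finding a sufficiently short vector, under a suitable weighted norm, in the lattice coming from the archimedean embeddings of $\F$, and then to invoke the LLL algorithm. Write $\sigma_1=\mathrm{id},\sigma_2,\dots,\sigma_k$ for the $k$ automorphisms of the Galois extension $\F/\Q$; since $\F$ is real these are the $k$ distinct embeddings $\F\hookrightarrow\R$. For $\mathbf a=(a_1,\dots,a_k)\in\Z^k$ set $\alpha(\mathbf a)=\sum_i a_i\beta_i$ and let $M=(\sigma_j(\beta_i))_{1\le j,i\le k}$, so that $(\sigma_1(\alpha(\mathbf a)),\dots,\sigma_k(\alpha(\mathbf a)))=M\mathbf a$. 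The matrix $M$ is invertible with $\det(M)^2=\operatorname{disc}(\beta_1,\dots,\beta_k)$ a nonzero integer, and its entries can be computed to any prescribed accuracy in time polynomial in the input size and the number of bits requested (e.g.\ via a primitive element of $\F$ and numerical root isolation).

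The conceptual heart is a reduction: it is enough to produce a single nonzero $\mathbf a$ for which $|\sigma_j(\alpha(\mathbf a))|<1$ for all $j\ge 2$. Indeed $\alpha:=\alpha(\mathbf a)$ is then an algebraic integer, nonzero because $\sigma_1(\alpha)=\alpha$ and the $\beta_i$ are linearly independent, so $\prod_j|\sigma_j(\alpha)|=|N_{\F/\Q}(\alpha)|\ge1$ forces $|\alpha|>1$; after possibly replacing $\mathbf a$ by $-\mathbf a$ we get $\alpha>1$. The conjugates of $\alpha$ over $\Q$ are among $\sigma_1(\alpha),\dots,\sigma_k(\alpha)$, and $\alpha$ is the unique one of modulus $\ge1$, so $\alpha$ is a Pisot number; moreover no $\sigma_j$ with $j\ge2$ fixes $\alpha$ (it moves it to modulus $<1$), so $\operatorname{Gal}(\F/\Q[\alpha])$ is trivial and $\Q[\alpha]=\F$. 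This is exactly where the Galois hypothesis does its work. (The trivial case $\F=\Q$, i.e.\ $k=1$, is covered by $\alpha=2$, so assume $k\ge2$.)

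To see that such an $\mathbf a$ of polynomially bounded size exists, I would apply Minkowski's convex body theorem to $K_T=\{x\in\R^k:|x_1|\le T,\ |x_j|\le\tfrac12\text{ for }j\ge2\}$: its volume is $2T$, while $\Lambda:=M\Z^k$ has covolume $\Delta:=|\det M|$, so as soon as $T>2^{k-1}\Delta$ there is a nonzero $M\mathbf a\in K_T\cap\Lambda$, which by the previous paragraph already answers the theorem, and $\|\mathbf a\|\le\|M^{-1}\|\,\sqrt{T^2+k}$ has polynomial bit-size. To actually find such an $\mathbf a$, rescale by $D=\operatorname{diag}(1/T,2,\dots,2)$: then $K_T$ becomes the unit cube and $\Lambda_s:=DM\Z^k$ has covolume $2^{k-1}\Delta/T$. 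An LLL-reduced basis of $\Lambda_s$ has first vector $b_1$ with $\|b_1\|_2\le2^{(k-1)/2}\lambda_1(\Lambda_s)\le2^{(k-1)/2}\sqrt k\,(2^{k-1}\Delta/T)^{1/k}$; choosing for $T$ a polynomial-bit integer large enough to make this last quantity at most $\tfrac12$ guarantees $\|b_1\|_\infty<1$, i.e.\ the corresponding $\mathbf a$ lies in the interior of $K_T$, so $|\sigma_j(\alpha(\mathbf a))|<1$ for $j\ge2$ and, after a sign change, we are done.

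The single technical nuisance is that $M$, hence $\Lambda_s$, is irrational, so LLL has to be run on a rational approximation $\tilde M$ of $M$ computed to $p$ bits. I would handle this by the usual device of iterating over $p=1,2,4,\dots$: for each $p$, run LLL on $D\tilde M\Z^k$, reconstruct $\alpha=\sum_i a_i\beta_i$ exactly inside $\F$, and test — with a certified numerical bound — whether $|\sigma_j(\alpha)|<1$ for every $j\ge2$, outputting (with a sign fix) the first $\mathbf a$ that passes. Because the estimate above was arranged with a constant-factor slack ($\|b_1\|_2\le\tfrac12$, not merely $<1$) and because the admissible $\mathbf a$ are a priori polynomially bounded, perturbing the lattice basis by $2^{-p}$ perturbs the output coordinates by $O(\text{poly}\cdot2^{-p})$, so the test succeeds once $p$ is polynomial in the input size. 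Every step — computing $O(p)$-bit approximations of the $\sigma_j(\beta_i)$, forming $\tilde M$ and $T$, a single LLL call in dimension $k$ on a poly-bit matrix, and exact arithmetic in $\F$ — runs in polynomial time, and the output coefficients $a_i$ have polynomial bit-size; this precision bookkeeping, rather than anything conceptual, is the part I expect to require the most care.
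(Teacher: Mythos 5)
Your proof is correct and follows essentially the same route as the paper: embed $\F$ into $\R^k$ via its real embeddings, rescale so that any lattice vector not corresponding to a Pisot number is forced to be long, use Minkowski's theorem for existence and LLL (with the scaling parameter chosen large enough to absorb the $2^{O(k)}$ approximation factor) to find one, and use the Galois hypothesis to conclude $\Q[\alpha]=\F$. You are in fact more careful than the paper on two points it glosses over: the norm argument $|N_{\F/\Q}(\alpha)|\ge 1$ showing $|\alpha|>1$ (with the sign fix), and the precision bookkeeping required because the lattice basis is irrational and LLL must be run on a rational approximation.
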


\begin{remark}
There are many ways to represent an algebraic number \cite{Cohen96}.
For example, one can represent an algebraic number by its minimal
polynomial and a complex number, which is closer to the number than
any of its conjugates. The size of an algebraic number is defined to
be the size of its minimal polynomial. The size of an integral
polynomial $\sum_{i=0}^d a_i x^d \in \Z[x]$ ($a_d \not=0$) is
defined to be $d \log (\max_i \{|a_i|+1\})$. In many of our
examples, we work in the real sub-field of a cyclotomic field
$\Q(\zeta)$, where $\zeta$ is a primitive root of unity. This allows
us to  represent an algebraic number as an element in $\Q[\zeta]$.
\end{remark}

\begin{remark}
For many number fields,  integral bases are known. However,
computing an integral basis of a number field is, in general, not an
easy problem, as it involves factorization of a large integer
\cite{Cohen96}.
\end{remark}

It is well-known that for a Pisot number $\alpha$, $\alpha^n$ is
exponentially close to an integer as $n$ grows. In this paper, we
investigate the problem of computing the integer $[\alpha^n]$ and
its remainder  modulo a positive integer $m$, where $[*]$ denotes
the function to the nearest integer. Modular exponentiation is the
most important operation in implementation of a public key
cryptography. By using the repeated squaring algorithm, $a^n$ can be
computed using only $O(\log n)$ many multiplications, and hence, $
a^n \pmod{m}$ can be computed efficiently if $a$ is an integer.
However, if the base of the exponentiation $a$ is not an integer,
then the problem of computing $ [a^n] $ is considered to be hard.
Note that $ [a^n] $ can be too large to be outputted, but in many
cases, we are interested in the number of basic operations in
integers to produce the number, regardless to the size of the
operands. To this end, Tau functions were introduced to measure
 the complexity of  an integer \cite{BSS}.

\begin{definition} A straight-line program to compute an integer
$n\in{\mathbb{N}}$ is a sequence of ring operations (namely,
addition, subtraction and multiplication) to produce the integer $n$
from the constant $1$. Let $\tau(n)$ be the length of the shortest
straight-line program computing $n$. For a sequence of integers
$x_1, x_2, \cdots, x_i, \cdots $, if there exists a polynomial $p$
such that $\tau(x_n)\leq{p(\log{n})}$, then the sequence of integers
is called easy to compute. Otherwise, we say that the sequence is
hard to compute.
\end{definition}

Many well-known integer sequences are conjectured to be hard to
compute, e.g. $n!$. Pascal Koiran \cite{PK04} conjectured that the
sequences $\lfloor 2^n\sqrt{2} \rfloor $  and $ \lfloor (3/2)^n
\rfloor $ are also hard to compute. Here we show that, on the
contrary, a similar sequence $[\alpha^n]$ is easy to compute if
$\alpha$ is a Pisot number. Namely, we show the following:

\begin{thm}
For a fixed  Pisot number $\alpha$, we can find a straight-line
program of length $O(\log n )$ for $[\alpha^n]$ in time $(\log
n)^{O(1)}$. Hence,
$$ \tau ([\alpha^n]) = O( \log n ). $$
\end{thm}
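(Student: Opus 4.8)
The plan is to identify $[\alpha^n]$, for all sufficiently large $n$, with the integer power sum $s_n = \sum_{i=1}^d \alpha_i^n$, where $\alpha = \alpha_1,\alpha_2,\dots,\alpha_d$ are the conjugates of $\alpha$, and then to compute $s_n$ by fast matrix exponentiation of the companion matrix of the minimal polynomial of $\alpha$. Fix the minimal polynomial $p(x) = x^d + c_{d-1}x^{d-1} + \cdots + c_0 \in \Z[x]$ of $\alpha$; it is monic with integer coefficients because $\alpha$ is an algebraic integer. For each $n$ the value $s_n = \mathrm{Tr}_{\Q[\alpha]/\Q}(\alpha^n)$ is a rational integer, the initial values $s_0 = d, s_1, \dots, s_{d-1}$ are explicit integers determined by the $c_i$ via Newton's identities, and multiplying $\alpha_i^d = -(c_{d-1}\alpha_i^{d-1} + \cdots + c_0)$ by $\alpha_i^{n-d}$ and summing over $i$ gives the linear recurrence $s_n = -(c_{d-1}s_{n-1} + \cdots + c_0 s_{n-d})$ for $n \ge d$, with all coefficients fixed integers.

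The Pisot hypothesis enters next. Writing $\rho = \max_{2 \le i \le d}|\alpha_i| < 1$, we have $|s_n - \alpha^n| = \bigl|\sum_{i=2}^d \alpha_i^n\bigr| \le (d-1)\rho^n < \tfrac12$ as soon as $n \ge n_0$, where $n_0 = \lceil \log(2(d-1))/\log(1/\rho)\rceil$ depends only on $\alpha$. Hence $[\alpha^n] = s_n$ for every $n \ge n_0$. For the finitely many $n < n_0$ the integer $[\alpha^n]$ is a fixed constant, which of course admits a straight-line program of length $O(1)$, hence of length $O(\log n)$.

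For $n \ge n_0$, let $C$ be the companion matrix of $p$, so that $(s_{m+1},\dots,s_{m+d})^{T} = C\,(s_m,\dots,s_{m+d-1})^{T}$ for all $m \ge 0$, and set $w = (s_0,\dots,s_{d-1})^{T}$; then $s_n$ is the first coordinate of $C^{\,n}w$. Given $n$, we construct a straight-line program that (i) writes down from the constant $1$ the $O(1)$ integer entries of $C$ and $w$; (ii) computes $C^{\,n}$ by repeated squaring guided by the binary expansion of $n$, costing $O(\log n)$ matrix multiplications, each a fixed number $O(d^3)$ of ring operations; (iii) multiplies by $w$ and outputs the first coordinate. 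The total length is $O(\log n)$, and since the program is generated mechanically from the binary expansion of $n$ and the $O(1)$ data attached to $\alpha$, it is produced in time $(\log n)^{O(1)}$. This gives $\tau([\alpha^n]) = O(\log n)$.

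The only delicate point is the identity $[\alpha^n] = s_n$ for large $n$, and this is precisely where the Pisot property is indispensable (the bound $\rho < 1$ is what forces the conjugate tail into a radius-$\tfrac12$ neighborhood of $0$); once this is established, the remainder is the standard fact that a sequence satisfying a fixed integer linear recurrence is computable by matrix powering in $O(\log n)$ ring operations. A minor bookkeeping matter is checking that every quantity appearing in the program — the $c_i$, the initial power sums, the entries of $C$ and $w$ — is a genuine integer, so that the construction is an honest straight-line program over $\Z$ in the sense of the definition above; this follows from $\alpha$ being an algebraic integer together with Newton's identities.
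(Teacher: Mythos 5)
Your proposal is correct and follows essentially the same route as the paper: both reduce $[\alpha^n]$ to the power sum $\sum_i \alpha_i^n$ via the Pisot bound $(d-1)\rho^n < \tfrac12$, and both compute that integer by $O(\log n)$ repeated squarings of the companion matrix of the minimal polynomial. The only (immaterial) difference is that the paper extracts the power sum as $\operatorname{tr}(\mathbf{C}^n(f))$ via diagonalization, whereas you extract it by applying $\mathbf{C}^n$ to the vector of initial power sums coming from Newton's identities.
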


As a corollary, we prove that
the problem of computing the modular exponentiation
of a Pisot number is easy.
More precisely,

\begin{cor}
Given a Pisot number $\alpha$, and two positive integers $m$ and $n$,
there exists an algorithm to compute $[\alpha^n]\mod{m}$
in time $(\log (mn))^{O(1)}$.
\end{cor}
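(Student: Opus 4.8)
The plan is to deduce the corollary from the preceding theorem — the straight-line program of length $O(\log n)$ for $[\alpha^n]$, constructible in time $(\log n)^{O(1)}$ — by the standard device of carrying out every operation modulo $m$. The issue to get around is that $[\alpha^n]$ has magnitude about $\alpha^n$, so merely writing it down already costs $\Omega(n)$ bits; but we only need its residue mod $m$, and that residue is unchanged if we reduce modulo $m$ after each step.

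First I would make the straight-line program explicit. If $\deg\alpha = 1$ then $\alpha$ is a rational integer $\ge 2$, $[\alpha^n] = \alpha^n$, and the claim is classical fast modular exponentiation; so assume $\deg\alpha = d \ge 2$, with minimal polynomial $f(x) = x^d + a_{d-1}x^{d-1} + \cdots + a_0 \in \Z[x]$ and conjugates $\alpha = \alpha_1, \dots, \alpha_d$. Since $|\alpha_i| < 1$ for $i \ge 2$, there is a constant $n_0 = n_0(\alpha)$ with $\bigl|\sum_{i\ge2}\alpha_i^n\bigr| < 1/2$ for all $n \ge n_0$, and then the nearest integer to $\alpha^n$ is the power sum $p_n := \sum_{i=1}^d \alpha_i^n = \mathrm{Tr}_{\Q[\alpha]/\Q}(\alpha^n) \in \Z$ (the rounding is unambiguous because $\alpha^n$ is irrational for a Pisot $\alpha$ of degree $\ge 2$). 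The sequence $(p_n)$ obeys the integer linear recurrence $p_n = -a_{d-1}p_{n-1} - \cdots - a_0 p_{n-d}$ with starting values $p_0, \dots, p_{d-1}$ computable from $f$ by Newton's identities, so $(p_{n-d+1}, \dots, p_n)$ is obtained by applying the $d \times d$ integer companion matrix of $f$ to a fixed vector $n - d + 1$ times, i.e. $O(\log n)$ ring operations on integers via repeated squaring ($d$ constant). The finitely many residues $[\alpha^n] \bmod m$ with $n < n_0$ are found by direct computation.

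Next I would run precisely this procedure in $\Z/m\Z$ rather than $\Z$: reduce the $a_i$ and $p_0,\dots,p_{d-1}$ mod $m$, form the companion matrix over $\Z/m\Z$, raise it to the power $n - d + 1$ by repeated squaring mod $m$, and read off $p_n \bmod m$. Since addition, subtraction and multiplication commute with the reduction $\Z \to \Z/m\Z$, this yields $p_n \bmod m = [\alpha^n] \bmod m$. Each operation in $\Z/m\Z$ costs $(\log m)^{O(1)}$ bit operations and there are $O(\log n)$ of them, for a total of $O(\log n)\cdot(\log m)^{O(1)} = (\log(mn))^{O(1)}$.

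Essentially all the substance is in the preceding theorem; the only points needing care are minor. One is that the straight-line program uses only ring operations — no division or comparison — so reduction mod $m$ is legitimate; this is clear from the companion-matrix description. The other, and the mild obstacle, is the mismatch between $[\alpha^n]$ and $p_n$ for small $n$, where $\sum_{i\ge2}\alpha_i^n$ may exceed $1/2$ in absolute value; as $\alpha$ is fixed this affects only $n < n_0(\alpha)$, a bounded set dispatched by finite case analysis, and $n_0(\alpha)$ together with the implied constant in $(\log(mn))^{O(1)}$ may depend on $\alpha$.
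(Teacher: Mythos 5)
Your proposal is correct and follows essentially the same route as the paper: for large $n$ replace $[\alpha^n]$ by the power sum $\sum_i \alpha_i^n$, realize it via $O(\log n)$ ring operations on the companion matrix of $f$ by repeated squaring, and evaluate the whole straight-line program in $\Z/m\Z$ (the paper reads off $\operatorname{tr}(\mathbf{C}^n(f))$ rather than applying $\mathbf{C}(f)^{n-d+1}$ to a vector of initial power sums, a cosmetic difference). Your explicit treatment of the small-$n$ threshold and of the degree-one case only makes the argument more careful than the paper's sketch.
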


The paper proceeds as follows: Section 1 demonstrates the first
algorithm to determine a Pisot number generating a given real algebraic field
and proves the Theorem 0.0.1. Section 2 describes the algorithms
to find a straight-line program for $[\alpha^n]$ and to compute
$[\alpha^n]\mod{m}$ of a given Pisot number $\alpha$ and proves the
Theorem 0.0.5 and Corollary 0.0.6.

\textbf{Notations:} Let the lowercase letters in bold and the capital letters in bold represent vectors and matrices, respectively.

\section{An algorithm to search a Pisot number in a totally real number field}
\subsection{Preliminaries}
Let $\R^n$ be the $n-$dimensional Euclidean space. A (full rank)
integral lattice is the set
$$\mathcal{L}=\{\sum_{i=1}^{n}x_i \mathbf{b}_i|x_i\in\Z\},$$
where $\mathbf{b}_1,\mathbf{b}_2,\cdots,\mathbf{b}_n$ are
linearly independent vectors over $\R$ and $\mathbf{b}_i \in \Z^n$
for $1\leq i \leq n$.
The determinant of the lattice is defined to be the absolute value
of the determinant of the matrix $(b_{ij})$, where $b_{ij}$ is the
$j-$th coordinate of $\mathbf{b}_i.$

Minkowski's convex body theorem (page 12 in \cite{MG02}) asserts
that given any convex set in $\R^n$, which is symmetric with respect
to the origin and with volume greater than $2^n\det(\mathcal{L})$, there
exists a non-zero lattice point in the set. As a corollary,
Minkowski's first theorem says that the length of the shortest
vector in $\mathcal{L}$ satisfies
$$\lambda_1<\sqrt{n}\det(\mathcal{L})^{1/n}.$$

While no known efficient algorithm can find the shortest vector,
or even a vector within the Minkowski's bound,
there are  polynomial time algorithms to approximate
the shortest vector in a lattice.
The Lenstra-Lenstra-Lovasz (LLL) algorithm can find in polynomial
time a vector whose length is at most $(2/\sqrt{3})^n$ times the
length of  the shortest vector of a lattice (see page 33 in
\cite{MG02}). The Block-Korkine-Zolotarev (BKZ) algorithm can
achieve a better approximation factor. In this paper, we use the LLL
reduction algorithm, which is adequate for our purpose.

\subsection{The problem and the idea}
Let $\F$ be a real algebraic field and let
$\beta_1,\cdots,\beta_k$ be its integral basis. Each algebraic
integer in $\F$ can be represented as
$$\alpha=z_1\beta_1+\cdots+z_k\beta_k,$$
where $z_1, z_2, \cdots, z_k $ are rational integers, and its
conjugates are
$$\sigma_i(\alpha)=z_1\sigma_i(\beta_1)+\cdots+z_k\sigma_i(\beta_k), (1\leq i \leq k-1),$$
where each $\sigma_i (1\leq i \leq k-1)$ is a field automorphism of
$\F$.

Let us consider the lattice $\mathcal{L}$ generated by the $k$
column vectors of the following matrix:
$$\D=\begin{bmatrix}
  \beta_1 & \beta_2 & \cdots & \beta_k     \\
  \sigma_1(\beta_1) & \sigma_1(\beta_2) & \cdots & \sigma_1(\beta_k)     \\
  \vdots   &           \vdots            &\ddots   & \vdots \\
  \sigma_{k-1}(\beta_1) & \sigma_{k-1}(\beta_2) & \cdots &
  \sigma_{k-1}(\beta_k)
\end{bmatrix}
.$$ We note that the square of the determinant of the matrix $\D$ is
the discriminant of the field $\F.$ Each column of $\D$ consists of one element of the integral basis and
its conjugates, thus each vector in the
lattice $\mathcal{L}$ corresponds to an algebraic integer of $\F$ given by
its first element.

It can be proved that there exist Pisot numbers in the field $\F$ by
applying Minkowski's theorem on the lattice $\mathcal{L}$ \cite{Pisot38} and
\cite[Page 3]{SL63}. Furthermore, we can derive an upper bound of
the minimal Pisot number from the above proof. For the completeness,
we include the modified proof below:

\begin{lem} \cite{SL63} Let $\F$ be a real algebraic
field with discriminant $\Delta_{\F}$. Given a real number $0<
\delta < 1,$ there exists a Pisot number $\alpha$ bounded by
$B=\frac{\sqrt{|\Delta_{\F}|}}{\delta^{k-1}}$ such that $\Q[\alpha]
= \F$.
\end{lem}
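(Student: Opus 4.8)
The plan is to apply Minkowski's convex body theorem to a suitably chosen symmetric convex region in $\R^k$ adapted to the lattice $\mathcal{L}$ generated by the columns of $\D$. First I would set up the target region: I want a lattice point $\mathbf{v} = (\alpha, \sigma_1(\alpha), \ldots, \sigma_{k-1}(\alpha))^T$ whose first coordinate $\alpha$ is large (bigger than $1$, in fact close to the stated bound $B$) while all the remaining coordinates $\sigma_i(\alpha)$ are small — strictly less than $1$ in modulus, so that $\alpha$ comes out a Pisot number. Concretely I would take the box $\{\mathbf{x} \in \R^k : |x_0| \le B,\ |x_i| \le \delta \text{ for } 1 \le i \le k-1\}$, which is convex, symmetric about the origin, and has volume $2^k B \delta^{k-1}$. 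Choosing $B = \sqrt{|\Delta_\F|}/\delta^{k-1}$ makes this volume equal to $2^k \sqrt{|\Delta_\F|} = 2^k |\det \D| = 2^k \det(\mathcal{L})$, which meets Minkowski's threshold; a standard limiting/strict-inequality argument (or enlarging $B$ by an arbitrarily small amount) yields a nonzero lattice point $\mathbf{v}$ inside.

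Next I would argue that the algebraic integer $\alpha$ corresponding to $\mathbf{v}$ (its first coordinate, by the observation in the text that each column of $\D$ carries an integral-basis element together with its conjugates) is genuinely a Pisot number. Its conjugates are exactly the coordinates $\sigma_i(\alpha)$, all of modulus $< 1$ by construction since $\delta < 1$. It remains to see $\alpha > 1$: since $\mathbf{v} \ne 0$ and $\alpha$ is a nonzero algebraic integer, the product of $\alpha$ with all its conjugates is a nonzero rational integer, hence has absolute value $\ge 1$; as every conjugate has modulus $< 1$, this forces $|\alpha| > 1$, and after possibly replacing $\alpha$ by $-\alpha$ (the box is symmetric, so $-\mathbf{v}$ is also in the lattice and the region) we may assume $\alpha > 1$. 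Finally $\alpha \le B$ by the box constraint.

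The last point to check is that $\Q[\alpha] = \F$. Here the key input is that $\F$ is \emph{Galois}: the conjugates $\sigma_i$ are all distinct field automorphisms of $\F$, and if $\Q[\alpha]$ were a proper subfield then some nontrivial $\sigma_i$ would fix $\alpha$, giving $\sigma_i(\alpha) = \alpha$. But $\alpha > 1$ while $|\sigma_i(\alpha)| < 1$, a contradiction. Hence no nontrivial automorphism fixes $\alpha$, so $\alpha$ has $k$ distinct conjugates and $[\Q(\alpha):\Q] = k = [\F:\Q]$, forcing $\Q[\alpha] = \F$.

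The main obstacle, such as it is, is the borderline case in Minkowski's theorem: the natural choice of $B$ makes the volume exactly $2^k \det(\mathcal{L})$ rather than strictly greater, so I would handle this either by invoking the closed-body version of Minkowski's theorem (which applies when the volume equals the bound, provided the body is compact) or by proving the statement for $B(1+\epsilon)$ for every $\epsilon > 0$ and then using a compactness/pigeonhole argument to extract a lattice point at the boundary value — a routine but necessary technicality. A secondary minor point is ensuring $\alpha \ne 0$: this follows because $\mathbf{v} \ne 0$ and the columns of $\D$ are linearly independent (the discriminant is nonzero), so no nonzero lattice point can have first coordinate zero without all coordinates being forced, but in fact one argues directly that a nonzero algebraic number cannot have all coordinates zero.
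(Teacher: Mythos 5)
Your proposal follows essentially the same route as the paper: apply Minkowski's convex body theorem to the box $|x_0|\le B$, $|x_i|\le\delta$ with respect to the lattice $\mathcal{L}$ generated by the columns of $\D$, then use the Galois hypothesis to rule out $\alpha$ lying in a proper subfield. In fact you are more careful than the paper on two points it glosses over — the norm argument showing $|\alpha|>1$ (so that $\alpha$ really is a Pisot number, not merely an algebraic integer with small conjugates) and the borderline equality case in Minkowski's theorem — so the proposal is correct and, if anything, more complete.
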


\begin{proof}
Firstly, we show the existence of Pisot numbers in the field.
For any positive real number $B$ and $\delta < 1 $,
 all the points $(r_1, r_2, \cdots, r_k) \in \R^k$ satisfying
$$ |r_1| <B {\rm \ \
and\ \ }|r_i| < \delta, (1\leq i \leq k-1) $$ form a hyper-cuboid of
volume $2^k B \delta^{k-1}$. If $B\delta^{k-1} \geq
\sqrt{|\Delta_{\F}|}$, which can be satisfied by set
$B=\frac{\sqrt{|\Delta_{\F}|}}{\delta^{k-1}}$. Then by Minkowski's
convex body theorem, there exists a nonzero lattice point of $\mathcal{L}$ in
the convex body. In other words, there exists rational integers
$z_1,\cdots,z_k$ such that
$$|z_1\beta_1+\cdots+z_k\beta_k|\leq B,$$
$$|z_1\sigma_i(\beta_1)+\cdots+z_k\sigma_i(\beta_k)|\leq \delta <1
\ (1\leq i \leq k-1).$$
Hence, the algebraic integer
$$\alpha=z_1\beta_1+\cdots+z_k\beta_k$$
is a Pisot number by definition.

Next, we need to show that $\Q[\alpha] = \F$. By definition,
$\alpha$ is greater than any of its conjugates. If we denote
$e=[\Q[\alpha]:\Q], f=[\F:\Q]$ and suppose $e<f$, then we have $e|f$
and let $f=ed$ for some $d\in \Z$. Thus, $\alpha$ will appear $d$ times in its Galois
conjugates, which is a contradiction with the definition of the
Pisot number.
\end{proof}

\subsection{The algorithm and its correctness}

The above lemma demonstrates the existence of the Pisot number in a
real algebraic field. However, the proof is nonconstructive; it does
not provide an efficient method to find a Pisot number. The key idea
of our algorithm below is to construct a new lattice similar to $\mathcal{L}$
and to convert the problem of determining a Pisot number in a given
total real field into the problem of finding a vector in the
lattice, whose length approximates the shortest vector.

Let $P$ be a positive real number,  we construct another lattice $\mathcal{L}_P$ generated
by the $k$ column vectors of the following matrix:

\begin{equation}\label{LP}
\D_P=\begin{bmatrix}
  \beta_1 & \beta_2 & \cdots & \beta_k     \\
  P\sigma_1(\beta_1) & P\sigma_1(\beta_2) & \cdots & P\sigma_1(\beta_k)     \\
   \vdots &  \vdots                       &\ddots   &\vdots  \\
  P\sigma_{k-1}(\beta_1) & P\sigma_{k-1}(\beta_2) & \cdots &
  P\sigma_{k-1}(\beta_k)
\end{bmatrix}
.
\end{equation}
Note that
$$\D = \D_1 {\rm \ and\ \ }  \det(\D_P) = P^{k-1} \det(\D). $$
We observe:
\begin{itemize}
\item From Minkowski's Theorem, we conclude that
%
there is vector in $\mathcal{L}_P$ with length at most $ \sqrt{k}
\sqrt[k]{P^{k-1} \det(\D)}$;
\item
On the other hand, if a vector is not
corresponding to a Pisot number, then its length is at least
$P$.
\end{itemize}
So we can choose an appropriate $P$ such that the gap between
$\sqrt{k} \sqrt[k]{P^{k-1} \det(\D)}  $ and $P$ is large enough, then
the LLL algorithm can find a short vector of length less than $P$,
which must correspond to a Pisot number. Our algorithm can be
described as follows:


\vskip 0.2cm

 \noindent \framebox[4.8in]{
\begin{minipage}{4.7in}
{\bf Algorithm 1}

Input: Integral basis $\beta_1,\cdots,\beta_k$ of a real Galois extension
$\F$ over $\Q$.

\begin{enumerate}
\item Compute $P$ to be an integer bigger than $(\frac{2}{\sqrt{3}})^{k^2}k^{k/2}\det(\D)$;
\item Construct the basis of the lattice $\mathcal{L}_P$ as the columns of the matrix $\D_P$ defined by Equation (\ref{LP});
\item Run LLL algorithm on the basis of $\mathcal{L}_P$;
\item Recover the Pisot number which is the absolute value of the first element of the returned approximate shortest vector.
\end{enumerate}
Output: A Pisot number.
\end{minipage}
}

\vskip 0.2cm

Now we proceed to prove Theorem 0.0.1. We need to show that the proposed
algorithm is correct and it runs in polynomial time of the input
size.

\vskip 0.2cm


\begin{proof} (of Theorem 0.0.1)
Firstly, we need to show that this algorithm returns a Pisot number.
According to Minkowski's Theorem, there is a nonzero vector in $\mathcal{L}_P$
of length at most $ \sqrt{k} \sqrt[k]{P^{k-1} \det(\D)}$. On the
other hand, for any algebraic integer $x_1\beta_1+\cdots+x_k\beta_k$
in $\F$ that is not a Pisot number, the vector
$$ x_1 \left( \begin{matrix} \beta_1\\ P\sigma_1(\beta_1)\\ \vdots
          \\ P\sigma_{k-1}(\beta_1) \end{matrix} \right)
+  x_2 \left( \begin{matrix} \beta_2\\ P\sigma_1(\beta_2)\\ \vdots
          \\ P\sigma_{k-1}(\beta_2) \end{matrix} \right)
+ \cdots + x_k \left( \begin{matrix} \beta_k\\ P\sigma_1(\beta_k)\\ \vdots
          \\ P\sigma_{k-1}(\beta_k) \end{matrix} \right)$$
in the lattice has length at least $P$, since
 there exists $1\leq i\leq k-1$ such
that
$$ |x_1P\sigma_i(\beta_1)+\cdots+x_kP\sigma_i(\beta_k)| = P |x_1\sigma_i(\beta_1)+\cdots+x_k\sigma_i(\beta_k)| \geq P. $$
If we set $ P > (\frac{2}{\sqrt{3}})^{k^2}k^{k/2}\det(\D)$, then
$$ P/(  \sqrt{k} \sqrt[k]{P^{k-1} \det(\D)}) >  (\frac{2}{\sqrt{3}})^k,  $$
then the vector returned by the LLL algorithm will have length less
than $P$, which must correspond to a Pisot number.

Next, we need to show that the returned Pisot number $\alpha$ is a
primitive element of the field. By definition, $\alpha$ is greater
than any of its conjugates. If we denote $e=[\Q[\alpha]:\Q],
f=[F:\Q]$ and suppose $e<f$, then we have $e|f$ and let $f=ed$ for some $d\in \Z$. Thus
$\alpha$ will appear $d$ times in its conjugates, which is a
contradiction with the definition of the Pisot number.

At last, we analyze the running time of the algorithm. First we
observe that $\log P = (k \det(\D))^{O(1)}$. The most costly part of
the algorithm is Step 3, where the LLL algorithm runs in polynomial
time of the size of the lattice basis.
Thus the overall time of the algorithm is
polynomial in the size of the integral basis.

\end{proof}

\begin{remark}
Given a real number $\varepsilon > 0$, if we choose $P$ such that
$$P>(\frac{2}{\sqrt{3}})^{k^2}k^{k/2}\det(\D)/{\varepsilon^k},$$
then we have
$$\varepsilon P > (\frac{2}{\sqrt{3}})^{k^2}k^{k/2}\det(\D).$$
Hence, the algorithm actually determines an $\varepsilon-$Pisot
number in this case.
\end{remark}

\subsection{Examples}
In the following examples, we use the lattice functions in Victor
Shoup's NTL package.

\begin{example}
Let us illustrate our algorithm by taking the field
$\Q(2\cos\frac{2\pi}{15})$ as an example. The extension degree
$k=[\Q(2\cos\frac{2\pi}{15}):\Q]=\frac{\phi(15)}{2}=4$ and an
integral basis is given by
$$\beta_1=2\cos\frac{2\pi}{15},\beta_2=2\cos\frac{4\pi}{15},\beta_3=2\cos\frac{8\pi}{15},\beta_4=2\cos\frac{14\pi}{15}.$$

\begin{enumerate}
\item Choose $\varepsilon=0.5$, compute $P=85769>(\frac{2}{\sqrt{3}})^{16}*4^2*\sqrt{1125}*16$;
\item Construct the basis of $\mathcal{L}_P$ as the column vectors of $\D_P$;
\item Run LLL algorithm over the basis of $\mathcal{L}_P$;
\item Recover the Pisot number which is the following:
$$\alpha=2105\beta_1+1215\beta_2+1440\beta_3+139\beta_4.$$
\end{enumerate}
\end{example}

\begin{remark}
We note that in the second step we first compute
$P\sigma_i(\beta_j)$ then take the integer part as the input of the
matrix. And we can check that the Galois conjugates of the returned
number are: -0.063765..., 0.065726..., and -0.048703....
\end{remark}

\begin{example}
Now let's look at another example, the field
$\Q(2\cos\frac{2\pi}{17}).$ The extension degree
$k=[\Q(2\cos\frac{2\pi}{17}):\Q]=\frac{\phi(17)}{2}=8$ and one
integral basis is given by
$$\beta_1=2\cos\frac{2\pi}{17},\beta_2=2\cos\frac{4\pi}{17},\beta_3=2\cos\frac{6\pi}{17},\beta_4=2\cos\frac{8\pi}{17},$$
$$\beta_5=2\cos\frac{10\pi}{17},\beta_6=2\cos\frac{12\pi}{17},\beta_7=2\cos\frac{14\pi}{17},\beta_8=2\cos\frac{16\pi}{17}.$$

\begin{enumerate}
\item Compute $P=825982306366 > (\frac{2}{\sqrt{3}})^{64}*8^4*\sqrt{410338673}$;
\item Construct the basis of $\mathcal{L}_P$ as the column vectors of $\D_P$;
\item Run LLL algorithm over the basis of $\mathcal{L}_P$;
\item Recover the Pisot number which is the following:
$$
\begin{aligned}
\alpha=&-24708871\beta_1-95498414\beta_2-202808109\beta_3-332145187\beta_4\\
       &-466041959\beta_5-586414924\beta_6-677007046\beta_7-725583357\beta_8.
\end{aligned}
$$
\end{enumerate}

\end{example}

\begin{remark}
We can compute the Galois conjugates of the returned number are: 0.039500..., 0.048267..., 0.064900..., -0.019990..., -0.057987...,
0.062209... and 0.036031....

\end{remark}

\section{An algorithm to compute modular exponential of a Pisot number}
\subsection{The problem and the idea}
Given a Pisot number $\alpha$ of degree $d$
and its minimal polynomial over $\Q$
$$ f(x)=x^d+c_{d-1}x^{d-1}+\cdots+c_1x+c_0,$$
we want to determine a straight-line program for $[\alpha^n]$ and
then to compute $[\alpha^n]\mod{m}$, where $n, m$ are given positive
numbers.

\begin{lem} Given a Pisot number $\alpha_1$ of degree $d$ with
conjugates
$\alpha_2,\cdots,\alpha_d,|\alpha_2|\geq|\alpha_i|,3\leq{i}\leq{d}$.
If $n>\log_{|\alpha_2|}{\frac{1}{2(d-1)}}$, then
$$[\alpha_1^n]= [\alpha_1^n+\alpha^n_2+\cdots+\alpha_d^n]= \alpha_1^n+\alpha^n_2+\cdots+\alpha_d^n.$$
\end{lem}

\begin{proof} Suppose $n>\log_{|\alpha_2|}{\frac{1}{2(d-1)}},$ we have
$$
|\alpha_2^n+\cdots+\alpha_d^n|\leq(d-1)|\alpha_2|^n
                            <\frac{1}{2}.
$$
Note that for any given positive integer $n$,
$\alpha_1^n+\alpha_2^n+\cdots+\alpha_d^n$ is an integer itself. Thus
we deduce if $n>\log_{|\alpha_2|}{\frac{1}{2(d-1)}},$ then
$$[\alpha_1^n]=[\alpha_1^n+\alpha^n_2+\cdots+\alpha_d^n]
=\alpha_1^n+\alpha^n_2+\cdots+\alpha_d^n.$$
\end{proof}

Lemma 2.1.1 shows that we can convert the problem of finding $[\alpha^n]$ of a Pisot number $\alpha$ to
the computation of $\alpha^n+\alpha^n_2+\cdots+\alpha_d^n$ when
$n>\log_{|\alpha_2|}{\frac{1}{2(d-1)}}$, where $\alpha_2, \alpha_3,
\cdots, \alpha_d$ are conjugates of $\alpha$. For the sake of
consistency, we will sometimes write $\alpha_1$ in the place of
$\alpha$ below.

\subsection{Notations and preliminaries}
Let the polynomial $f(x)=x^d+c_{d-1}x^{d-1}+\cdots+c_1x+c_0$ be the
minimal polynomial for a Pisot numuber $\alpha$ over $\Q$.
The companion matrix \cite{Matrix85} of the polynomial $f(x)$ is defined by
$$\mathbf{C}(f)=\begin{bmatrix}
  0 & 0 & \cdots & 0 & -c_0     \\
  1 & 0 & \cdots & 0 & -c_1     \\
  0 & 1 & \cdots & 0 & -c_2     \\
  \vdots& \vdots&\ddots &\vdots &\vdots    \\
  0 & 0 & \cdots & 1 & -c_{d-1}
\end{bmatrix}
.
$$
Since $f(x)$ is irreducible over $\Q[x]$, it has distinct roots $\alpha_1, \alpha_2, \cdots, \alpha_d$. Thus the
companion matrix is diagonalizable as follows:
$$\V\mathbf{C}(f)\V^{-1}=\begin{bmatrix}
\alpha_1 &                      \\
         &\ddots                \\
           &       &\alpha_d
\end{bmatrix}
,
$$
where all the non-diagonal elements are zero and $\V$ represents the
Vandermonde matrix corresponding to the $\alpha_i$:

$$\V=\begin{bmatrix}
 1 & \alpha_1 & \alpha_1^2 & \cdots & \alpha_1^{d-1}  \\
 1 & \alpha_2 & \alpha_2^2 & \cdots & \alpha_2^{d-1}  \\
 \vdots & \vdots  &\vdots &\ddots &\vdots \\
 1 & \alpha_d & \alpha_d^2 & \cdots & \alpha_d^{d-1}
 \end{bmatrix}.
 $$

\subsection{The algorithm and its correctness}
Given a Pisot number $\alpha$ of degree $d$ with conjugates
$\alpha_2,\cdots,\alpha_d$ and its minimal polynomial over $\Q$
$$ f(x)=x^d+c_{d-1}x^{d-1}+\cdots+c_1x+c_0,$$
firstly, we determine $\tau([\alpha^n])$, where $n$ are given positive
numbers.

\vskip 0.2cm

 \noindent \framebox[4.8in]{
\begin{minipage}{4.7in}
{\bf Algorithm 2}

Input: A Pisot number $\alpha$ with conjugates
$\alpha_2,\cdots,\alpha_d$ and its minimal polynomial over $\Q$:
$f(x)=x^d+c_{d-1}x^{d-1}+\cdots+c_1x+c_0$ and a positive integer
$n$.
\begin{enumerate}
\item If $n\leq\log_{|\alpha_2|}{\frac{1}{2(d-1)}},$ compute $[\alpha_1^n]$
directly;
\item If $n>\log_{|\alpha_2|}{\frac{1}{2(d-1)}},$
\begin{enumerate}
\item Construct $\mathbf{C}(f)$;
\item Find a straight-line program for every entry of
       $\mathbf{C}^n(f)$ utilizing the repeated squaring algorithm;
\item Compute the trace of $\mathbf{C}^n(f)$.
\end{enumerate}
\end{enumerate}
Output: A straight-line program of computing $[\alpha^n]$.
\end{minipage}
}

\vskip 0.2cm

Now we proceed to prove Theorem 0.0.5, namely, we need to show that
the proposed algorithm is correct, and the number of basic
operations involved is polynomial in the input size.

\vskip 0.2cm


\begin{proof} ( of Theorem 0.0.5 )
Firstly, we show that the algorithm is correct. When
$n>\log_{|\alpha_2|}{\frac{1}{2(d-1)}}$, by Lemma 3.1, we have
$$[\alpha^n]=\alpha_1^n+\cdots+\alpha_d^n.$$

Since the conjugates of $\alpha$ are distinct, the companion matrix of $f(x)$ can be diagonalized as
$$\V\mathbf{C}(f)\V^{-1}=\begin{bmatrix}
\alpha_1 &                      \\
         &\ddots                \\
           &       &\alpha_d
\end{bmatrix}
,
$$
where all the non-diagonal elements are zero and $\V$ represents the
Vandermonde matrix corresponding to the $\alpha_i$. We have
$$(\V\mathbf{C}(f)\V^{-1})^n=\begin{bmatrix}
\alpha_1^n &                      \\
         &\ddots                \\
           &       &\alpha^n_d
\end{bmatrix}.
$$

Because
$$(\V\mathbf{C}(f)\V^{-1})^n=\V\mathbf{C}^n(f)\V^{-1},$$
we have
$$
\begin{aligned}
\operatorname{tr}(\V\mathbf{C}^n(f)\V^{-1})&=\operatorname{tr}((\V\mathbf{C}(f)\V^{-1})^n)\\
                 &=\alpha_1^n+\cdots+\alpha_d^n\\
                 &=[\alpha^n],
\end{aligned}
$$
where $tr$ is the trace function of the matrix. Furthermore, we have
$$\operatorname{tr}(\mathbf{C}^n(f))=\operatorname{tr}(\V\mathbf{C}^n(f)\V^{-1}),$$ hence
$$\operatorname{tr}(\mathbf{C}^n(f))=[\alpha^n].$$

Next, we analyze the number of basis operations needed. Since the
computation of the the matrix $C^n(f)$ takes $O( \log n )$
matrix multiplications  and other steps take constant number of operations,
we have
$$ \tau ([\alpha^n]) = O( \log n ). $$

\end{proof}

We can modify the last algorithm to compute the modular exponentiation
of a Pisot number as follows:

\vskip 0.2cm

\noindent \framebox[4.8in]{
\begin{minipage}{4.7in}
{\bf Algorithm 3}

Input: A Pisot number $\alpha$ of degree $d$
given by its minimal polynomial over $\Q$:
$f(x)=x^d+c_{d-1}x^{d-1}+\cdots+c_1x+c_0$,
two positive integers
$m,n$.
\begin{enumerate}
\item Construct a straight-line program of length $O(\log n)$
for  $[\alpha^n]$;
\item Evaluate the straight-line program in the ring $\Z/m\Z$.
\item Output the last step of the straight-line program.
\end{enumerate}
Output: $[\alpha^n]\mod{m}$.
\end{minipage}
}

\vskip 0.2cm

\vskip 0.2cm

Sketch of the proof of Corollary 0.0.6: we need to show that the
proposed algorithm is correct and it runs in polynomial time of the
input size. The proof is similar with the proof of Theorem 0.0.5
except that here we compute $\mathbf{C}^n(f)\mod{m}$ instead of $\mathbf{C}^n(f)$
which makes it run in time $(\log (mn))^{O(1)}$.

\section{Concluding remarks}
In this paper, we present two deterministic polynomial time
algorithms about certain computations of Pisot numbers. The first
one is to search a Pisot number $\alpha$ such that $\Q[\alpha]=\F$
given a real Galois extension $\F$ of $\Q$ with integral basis. We
remark that we can find Pisot numbers with high degree utilizing the
algorithm. The second one is to compute the modular exponentiation of a
Pisot number.

\bibliographystyle{plplain}
\bibliography{pisot_comp}
\bibliographystyle{plain}

\end{document}